\newcommand\scalemath[2]{\scalebox{#1}{\mbox{\ensuremath{\displaystyle #2}}}}
\def\l@subsection{\@tocline{2}{0pt}{4pc}{6pc}{}}
\title{Intersections of real symmetric hypersurfaces}
\author{Samuel Lidz} 
\address{Department of Mathematics, University of Maryland, College Park, MD 20742, USA}
\email{\href{mailto:slidz@umd.edu}{slidz@umd.edu}}
\thanks{}
\author{Zachary Lihn} 
\address{Department of Mathematics, Columbia University, New York, NY 10027, USA}
\email{\href{mailto:zal2111@columbia.edu}{zal2111@columbia.edu}}
\thanks{}
\author{Adam Melrod} 
\address{Department of Mathematics, University of Maryland, College Park, MD 20742, USA}
\email{\href{mailto:abmelrod@umd.edu}{abmelrod@umd.edu}}
\thanks{}
\date{August 2024}
\begin{document}

\begin{abstract}
     We prove a symmetric version of B\'ezout's theorem. More precisely, we show that the symmetric orbit type of a transverse intersection of complex symmetric hypersurfaces in projective space is determined by the degrees. In the projective plane, we fully classify the possible orbit types of such intersection loci using completely elementary methods. From this classification, we obtain strong restrictions on the number of real points in the intersection of real symmetric curves. We also provide a partial classification in $\P^3_\C$, with a similar restriction on real points.
\end{abstract}

\maketitle

\renewcommand\contentsname{\vspace{-1cm}}
\tableofcontents

\setlength{\parindent}{0em}
\parskip 0.75em

\section{Introduction}

The classical B\'ezout's theorem states that the intersection of $n$ hypersurfaces $V(f_1, \dots, f_n) \subseteq \P_\C^n$ consists of $\deg f_1 \cdots \deg f_n$ points, counted with multiplicity. If the hypersurfaces are invariant under a group action, one expects to obtain enriched, \emph{equivariant} information about the orbits of the induced action on the vanishing set. More generally, one expects a theory of equivariant enumerative geometry calculating enriched counts 
of solutions to an enumerative problem with a group action. 

In this paper, we apply the results of \cite{brazelton2024equivariant} to obtain a symmetric version of B\'ezout's theorem for symmetric polynomials over the complex numbers. Assuming the corresponding hypersurfaces are transverse, the intersection locus gains the structure of an $S_{n+1}$-set, where $S_{n+1}$ is the symmetric group on $(n+1)$ letters. We show in Section 2 that the resulting $S_{n+1}$-orbit types depend only on the degrees of the chosen hypersurfaces, i.e. are independent of the specific hypersurfaces chosen. Following similar methods as \cite{brazelton2024equivariant}, we describe how to apply our theorem to obtain bounds on the number of real points in the intersection locus of real symmetric hypersurfaces. 

In Section 3, we specialize to the case of $\P^2_\C$, with $S_3$ acting by permuting the coordinates. We give a full classification of the possible orbit types of the intersection locus, depending only on the degrees. We also find conditions under which intersections of symmetric curves are never transverse. More precisely, we prove the following.
\begin{thm}[Theorem \ref{p2intersection}]
    Let $C$ and $D$ be transverse symmetric curves in $\P^2_\C$ of degree $d$ and $e$, respectively. Then the symmetric orbit type of their intersection locus is determined by the product of the degrees modulo 6. It can be read off of the following table.
    \begin{center}
        \begin{tabular}{|c||c|c|c|c|c|c|}\hhline{|-||-|-|-|-|-|-|}
            $d \cdot e$ & $6k$ & $6k+1$ & $6k+2$ & $6k+3$ & $6k+4$ & $6k+5$ \\ \hhline{|-||-|-|-|-|-|-|}
            $S_3$-orbit type & $k[S_3]$ & \textcolor{red}{$\times$} & $k[S_3] + [S_3/C_3]$ & $k[S_3] +  [S_3/C_2]$ & \textcolor{red}{$\times$} & $k[S_3] +  [S_3/C_2] + [S_3/C_3]$ \\ \hhline{|-||-|-|-|-|-|-|}
        \end{tabular}
    \end{center}
    The symbol \textcolor{red}{$\times$} signifies that a transverse intersection with these degrees is not possible. 
\end{thm}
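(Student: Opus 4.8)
The plan is to localize the problem at the finitely many points of $\P^2_\C$ with nontrivial $S_3$-stabilizer and then bookkeep with B\'ezout. First I would record the $S_3$-orbit structure on $\P^2_\C$. Every subgroup of $S_3$ is one of $\{e\}$, a transposition subgroup $C_2$, the alternating subgroup $C_3=A_3$, or $S_3$; accordingly the only fixed point is $[1{:}1{:}1]$, the unique orbit with stabilizer $C_3$ is $T=\{[1{:}\omega{:}\omega^2],\,[1{:}\omega^2{:}\omega]\}$ (with $\omega$ a primitive cube root of unity), and a transposition $\tau=(ij)$ has fixed locus the line $\{x_i=x_j\}$ together with a single isolated point; the three isolated points form one orbit $R=\{[1{:}{-}1{:}0],\,[1{:}0{:}{-}1],\,[0{:}1{:}{-}1]\}$ of type $[S_3/C_2]$. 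Consequently every $S_3$-orbit of points in $\P^2_\C$ is free, or equals $T$, or is of type $[S_3/C_2]$, and a point of the last kind lies on one of the diagonals $\{x_i=x_j\}$ unless it belongs to $R$.

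Next I would establish a package of local lemmas, all via Euler's identity together with the $S_3$-invariance of the defining form and the presentation of symmetric forms as polynomials in $e_1,e_2,e_3$. Let $V(f)$ be symmetric of degree $d$. \textbf{(i)} If $[1{:}1{:}1]\in V(f)$ then $V(f)$ is singular there: the differential at this fixed point is an $S_3$-invariant functional, hence a multiple of $e_1$, which Euler's identity then forces to vanish. \textbf{(ii)} Since $e_1$ and $e_2$ both vanish at $[1{:}\omega{:}\omega^2]$, every degree-$d$ monomial $e_1^ae_2^be_3^c$ with $3\nmid d$ vanishes there, so $3\nmid d$ forces $T\subseteq V(f)$; moreover, as $[1{:}\omega{:}\omega^2]$ is an eigenvector of the $3$-cycle, invariance plus homogeneity force the differential to be a $3$-cycle eigenfunctional of a weight depending only on $d\bmod 3$, so if $V(f)$ is smooth at $[1{:}\omega{:}\omega^2]$ its tangent line there is $V(e_1)$ when $d\equiv1\ (\mathrm{mod}\ 3)$ and $V(x_0+\omega x_1+\omega^2x_2)$ when $d\equiv2$ — two distinct lines — while if $3\mid d$ any symmetric curve through $T$ is singular along $T$. \textbf{(iii)} For a point $P\ne[1{:}1{:}1]$ on a diagonal $\{x_i=x_j\}$, the symmetry $x_i\leftrightarrow x_j$ together with Euler's identity pins down the tangent line of any symmetric curve smooth at $P$ to a single line depending only on $P$. \textbf{(iv)} For $d$ odd, $e_1$ and $e_3$ vanish at $[0{:}1{:}{-}1]$ and every degree-$d$ monomial has $a+c$ odd, so $R\subseteq V(f)$, and as $f$ varies the tangent line of $V(f)$ at $[0{:}1{:}{-}1]$ sweeps the entire pencil of lines through that point; for $d$ even, the $(-1)$-eigenvalue of $(ij)$ at $[0{:}1{:}{-}1]$ confines the differential to a one-dimensional space which Euler's identity kills, so a symmetric curve of even degree that meets $R$ is singular at every point of $R$.

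Now I would assemble everything. Let $(C,D)$ be a transverse pair of symmetric curves of degrees $d,e$ and put $Z=C\cap D$, so $|Z|=de$ by B\'ezout, all intersection points being simple. By (i) and (iii), $Z$ misses $[1{:}1{:}1]$ and all three diagonals; hence every orbit of $Z$ is free, or equals $T$, or equals $R$, and $Z$ contains at most one copy of each of $T,R$. By (ii): if $3\mid de$ then $T\not\subseteq Z$; if $3\nmid d$ and $3\nmid e$ then $T\subseteq C\cap D$ automatically, and since in a transverse pair $C,D$ must be smooth at $[1{:}\omega{:}\omega^2]$ with the forced tangent lines above, such a pair can exist only when $d\not\equiv e\ (\mathrm{mod}\ 3)$, in which case $T\subseteq Z$ — so no transverse pair of degrees $(d,e)$ exists when $d\equiv e\not\equiv 0\ (\mathrm{mod}\ 3)$, and $T\subseteq Z$ exactly when $de\equiv2\ (\mathrm{mod}\ 3)$. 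Similarly, by (iv), $R\subseteq Z$ iff $d$ and $e$ are both odd, i.e.\ iff $de$ is odd. Writing $de=6k+2\cdot[T\subseteq Z]+3\cdot[R\subseteq Z]$ with $k\ge0$ the number of free orbits, and splitting into residues modulo $6$, yields exactly the table: the residues $1$ and $4$ (equivalently $de\equiv1\bmod3$) are impossible by the above, and in the other four residues the free-orbit count is the $k$ displayed there, with $T$ contributing $[S_3/C_3]$ when $de\equiv 2$ or $5$ and $R$ contributing $[S_3/C_2]$ when $de\equiv3$ or $5$. To see the four admissible columns are genuinely realized, I would run a Bertini-type argument on the linear system of symmetric degree-$d$ curves, using (ii)--(iv) to check that generic members are smooth (including at the base points $T$ when $3\nmid d$ and $R$ when $d$ is odd) and that two generic members meet transversally, the only possible obstruction to transversality being the tangent coincidence at $T$, which is ruled out by $de\not\equiv1\bmod3$.

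The crux is the package of local lemmas, and especially (iv): the point $[0{:}1{:}{-}1]$ behaves in opposite ways according to the parity of $d$, and it is the odd case — where the tangent direction is genuinely unconstrained — that produces the $[S_3/C_2]$-summand, while the even case (a forced singularity) is exactly what forbids it. Equally essential is the observation in (iii) that $Z$ can contain no point of any diagonal $\{x_i=x_j\}$; this is what eliminates all the ``unexpected'' $[S_3/C_2]$-orbits and collapses the answer to the single clean dependence on $de\bmod 6$.
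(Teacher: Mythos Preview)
Your argument is correct, and its backbone --- ruling out $[1{:}1{:}1]$ via Euler's identity and eliminating every diagonal point via the forced tangent line --- is exactly the paper's setup leading to Table~\ref{table:P2-fixedpoints}. The difference is in how you finish. Once one knows that the only non-free orbits possible in a transverse $Z$ are the single orbit $T$ of size $2$ and the single orbit $R$ of size $3$, the paper simply writes $de=6k+3a+2b$ with $a,b\in\{0,1\}$ and reads off $(a,b,k)$ from $de\bmod 6$; the impossibility of $de\equiv 1\pmod 3$ follows from the same arithmetic. No analysis of \emph{when} $T$ or $R$ actually lies in $Z$ is required. You instead determine those inclusions directly: your (ii) computes the tangent line at $[1{:}\omega{:}\omega^2]$ as an eigenfunctional depending only on $d\bmod 3$, giving $T\subseteq Z\iff de\equiv 2\pmod 3$ and deriving the non-transversality for $d\equiv e\not\equiv 0\pmod 3$ from a tangent coincidence rather than by counting; your (iv) uses the vanishing of $e_1,e_3$ at $R$ together with a parity-dependent singularity to get $R\subseteq Z\iff de$ is odd. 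All of this is correct and supplies a more geometric explanation of each entry of the table, but it is more than the paper needs. Your Bertini sketch for realizability is a genuine addition; the paper does not address existence, its statement being conditional on a transverse pair being given.
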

 Using Theorem \ref{p2intersection}, we also obtain the following theorem on the size of the real intersection locus of transverse real symmetric curves. 
 
\begin{thm}[Theorem \ref{p2realpoints}]
    Let $C$ and $D$ be real transverse symmetric curves in $\P^2_\C$ of degrees $d$ and $e$ respectively, with corresponding polynomials $f$ and $g$. 
    \begin{enumerate}[(a)]
        \item If $de \equiv 3,5\bmod{6}$, then $V(f,g)$ has $3+6k$ real points for some integer $0\leq k < de/6$.
        \item If $de \equiv 0,2\bmod{6}$, then $V(f,g)$ has $6k$ real points for some integer $0\leq k\leq de/6$.
    \end{enumerate}
\end{thm}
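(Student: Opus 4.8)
The plan is to deduce the theorem from Theorem~\ref{p2intersection} by overlaying the complex-conjugation action on the $S_3$-action, following the strategy of~\cite{brazelton2024equivariant} for extracting real bounds. Write $\sigma$ for complex conjugation on $\P^2_\C$. Since $f$ and $g$ have real coefficients, $\sigma$ preserves $V(f,g)$, and since $S_3$ acts through permutation matrices defined over $\R$, the involution $\sigma$ commutes with the $S_3$-action. Two consequences: $\sigma$ permutes the $S_3$-orbits of $V(f,g)$ while preserving their isomorphism type, and a point of $V(f,g)$ is real if and only if its entire $S_3$-orbit is real (apply $\sigma$ to $\gamma\cdot p$ and commute). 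Hence $V(f,g)(\R)$ is the disjoint union of those $S_3$-orbits of $V(f,g)$ on which $\sigma$ acts trivially, and it remains to decide, orbit type by orbit type, when that occurs.

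Here I would use the elementary fact that a nontrivial $S_3$-equivariant automorphism of a transitive $S_3$-set $S_3/H$ is fixed-point free, together with the identification $\operatorname{Aut}_{S_3}(S_3/H)\cong N_{S_3}(H)/H$. A $\sigma$-stable orbit $O\subseteq V(f,g)$ carries the equivariant involution $\sigma|_O$, which is therefore either the identity (so $O$ is entirely real) or fixed-point free (so $O$ contributes no real points); moreover a $\sigma$-conjugate pair of distinct orbits contributes no real points. For a free orbit, $\operatorname{Aut}_{S_3}(S_3)\cong S_3$ does admit nontrivial involutions, so the free part contributes $6m$ real points for some $0\le m\le k$, where $k$ is the number of copies of $[S_3]$ in the orbit type. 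For an orbit of type $[S_3/C_2]$ one has $N_{S_3}(C_2)/C_2 = 1$, so the only equivariant self-map is the identity; since the tables of Theorem~\ref{p2intersection} contain at most one copy of $[S_3/C_2]$, that copy must be $\sigma$-stable, hence entirely real, contributing exactly $3$ real points. Finally, the locus of $C_3$-fixed points in $\P^2_\C$ is $\{[1:1:1],\,[1:\omega:\omega^2],\,[1:\omega^2:\omega]\}$ with $\omega = e^{2\pi i/3}$, and the two points with stabilizer exactly $C_3$ are non-real, so an $[S_3/C_3]$ orbit contributes $0$ real points.

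Assembling these contributions against the four admissible columns of Theorem~\ref{p2intersection} yields the statement: for $de\equiv 0\bmod 6$ the count is $6m$ with $0\le m\le de/6$; for $de\equiv 2\bmod 6$ it is $6m$ with $0\le m\le (de-2)/6$; for $de\equiv 3\bmod 6$ it is $3+6m$ with $0\le m\le (de-3)/6$; and for $de\equiv 5\bmod 6$ it is $3+6m$ with $0\le m\le (de-5)/6$ — which are precisely (a) and (b) after renaming $m$ to $k$. The step I expect to require the most care is the bookkeeping for the $[S_3/C_2]$ and $[S_3/C_3]$ orbits: one must argue they are $\sigma$-stable rather than paired off with another orbit of the same type (this is exactly where the multiplicity information in Theorem~\ref{p2intersection} is needed) and then pin down their fixed-point sets exactly; everything else is a mechanical combination of Theorem~\ref{p2intersection} with standard facts about $G$-sets.
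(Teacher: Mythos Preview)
Your proposal is correct and follows essentially the same route as the paper: feed the orbit decomposition of Theorem~\ref{p2intersection} into the observation that complex conjugation commutes with the $S_3$-action, and then decide orbit type by orbit type whether the points are real. The only difference is cosmetic: where you invoke $N_{S_3}(C_2)/C_2=1$ to force the unique $[S_3/C_2]$-orbit to be pointwise $\sigma$-fixed, the paper simply reads off from the earlier explicit classification that this orbit is $\{[-1{:}1{:}0],[-1{:}0{:}1],[0{:}{-1}{:}1]\}$ and hence real.
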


In Section 4, we carry out a similar analysis in $\P^3_\C$ with $S_4$ acting. Here, we only obtain a partial classication of the orbits. However, we can still use this to make strong statements about the real intersection locus of transverse real symmetric surfaces, and about conditions for non-transversality. For example, we prove the following.
\begin{thm}[Theorem \ref{p3realpoints}]
    If any three real symmetric surfaces in $\P^3_\C$ intersect transversely, the number of real points in their intersection is a multiple of 12.
\end{thm}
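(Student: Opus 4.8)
The plan is to reduce the statement to a bound on point stabilizers and then count orbits. Write $X=V(f_1,f_2,f_3)\subseteq\P^3_\C$, where the $f_i$ are homogeneous symmetric polynomials with real coefficients whose zero loci meet transversely; then $X$ is a finite reduced $S_4$-set, $S_4$ permuting homogeneous coordinates. This action commutes with complex conjugation $\tau$, and $X$ is $\tau$-invariant since the $f_i$ are real, so the real locus $X(\R)=X^\tau$ is an $S_4$-invariant subset of $X$; moreover for real $p$ and $g\in S_4$ we have $\tau(gp)=g\tau(p)=gp$, so each $S_4$-orbit of a real point consists of real points. Hence $X(\R)$ is a disjoint union of full $S_4$-orbits, each of cardinality $24/|\mathrm{Stab}_{S_4}(p)|$ for $p$ in the orbit. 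It therefore suffices to prove: \emph{every real point of a transverse intersection of symmetric surfaces has $S_4$-stabilizer of order at most $2$} --- then each orbit has cardinality $12$ or $24$, and $12\mid|X(\R)|$.

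For this I would first record, as part of the fixed-locus bookkeeping underlying the classification of Section~4, that a real point $p\in\P^3_\C$ whose stabilizer $H:=\mathrm{Stab}_{S_4}(p)$ has order $\geq 3$ is, up to the $S_4$-action, one of $[t:t:t:s]$ with $H\cong S_3$; $[s:s:t:t]$ or $[1:-1:0:0]$ with $H$ a non-normal Klein four-group; $[1:1:-1:-1]$ with $H\cong D_4$; or $[1:1:1:1]$ with $H=S_4$ (the fixed loci of $C_3$, of the normal $V_4$, of $C_4$, and of $A_4$ contribute no further real points --- their extra fixed points involve roots of unity or have strictly larger stabilizer). Since a permutation that scales a nonzero real vector scales it by $\pm 1$, each such $H$ acts on the line $\langle p\rangle\subseteq\C^4$ through a character $\chi\colon H\to\{\pm 1\}$ with $\chi^2=\mathbf 1$.

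Now let $V(f)$ be a symmetric hypersurface of degree $d$ smooth at such a $p$, and identify $(\C^4)^*$ with $\C^4$ via the $S_4$-invariant form $\langle x,y\rangle=\sum_i x_iy_i$. Then $\nabla f$ is $S_4$-equivariant and homogeneous of degree $d-1$, so $h\cdot\nabla f(p)=\chi(h)^{d-1}\nabla f(p)$ for $h\in H$; hence $\nabla f(p)$ lies in the isotypic component $L_{\chi^{d-1}}$ of $\C^4|_H$, which equals $L_{\mathbf 1}$ for $d$ odd and $L_\chi$ for $d$ even. Euler's identity $\langle\nabla f(p),p\rangle=d\,f(p)=0$ refines this to $\nabla f(p)\in L_{\chi^{d-1}}\cap p^\perp$. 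The decisive input is that a real $p$ is \emph{anisotropic}: $\langle p,p\rangle=\sum_i p_i^2>0$. Since $p\in L_\chi$ and distinct one-dimensional isotypic components are $\langle\cdot,\cdot\rangle$-orthogonal, $L_{\mathbf 1}\perp p$, so $L_{\mathbf 1}\cap p^\perp=L_{\mathbf 1}$, while $L_\chi\cap p^\perp$ is a proper subspace of $L_\chi$; and a short count (just the fact that $H$ permutes only four coordinates) gives $\dim L_{\mathbf 1}\leq 2$ and $\dim L_\mu\leq 1$ for every nontrivial character $\mu$. Therefore an even-degree symmetric surface with $\chi\neq\mathbf 1$ has $\nabla f(p)\in L_\chi\cap p^\perp=0$ and so is singular at $p$ (e.g.\ every symmetric surface is singular at $[1:1:1:1]$), while in every other case $\nabla f(p)$ lies in a fixed subspace of $\C^4$ of dimension $\leq 2$, namely $L_{\mathbf 1}$ or $L_{\mathbf 1}\cap p^\perp$. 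Hence if $V(f_1),V(f_2),V(f_3)$ were transverse at such a real $p$, each would be smooth there and the three vectors $\nabla f_1(p),\nabla f_2(p),\nabla f_3(p)$ would all lie in one subspace of dimension $\leq 2$, hence be linearly dependent --- contradicting transversality, which says these three covectors are linearly independent (equivalently, the three tangent hyperplanes meet only at $p$). This proves the stabilizer bound, and with it the theorem.

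The crux --- and the step I expect to be the main obstacle --- is the dimension bound on admissible gradient directions at a symmetric point, i.e.\ controlling which hyperplanes occur as tangent planes of symmetric surfaces there. Equivariance of $\nabla f$ alone only puts $\nabla f(p)$ in a union of isotypic lines, one per residue of $\deg$, and this union need \emph{not} lie in a low-dimensional subspace; it is precisely the reality of $p$ --- hence anisotropy, which both forces $L_{\mathbf 1}\subseteq p^\perp$ and shrinks $L_\chi\cap p^\perp$ --- that collapses it to dimension $\leq 2$. That reality, not merely a large stabilizer, is essential is visible from the surfaces $\sum_i x_i=0$, $\sum_i x_i^2=0$, $\sum_i x_i^3=0$: their common locus is exactly the $S_4$-orbit of $[1:i:-1:-i]$, six points forming a transverse complete intersection of symmetric surfaces --- so the orbit-counting step genuinely must be run over the real locus. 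The remaining ingredient, the enumeration of which subgroups occur as stabilizers of real points, is routine and should be readable off the analysis in Section~4.
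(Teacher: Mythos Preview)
Your argument is correct, and the route is genuinely different from the paper's. The paper's proof is a two–line appeal to Table~2: the only isotropy types that survive the repeated–coordinate Lemma~\ref{repeat-coordinates} are the trivial group, $C_2^e$, $C_3$, and $C_4$, and of the corresponding fixed points only the trivial and $C_2^e$ ones are real; hence every real orbit has size $12$ or $24$. In other words, the paper first excludes large isotropy by the elementary observation ``two equal coordinates $\Rightarrow$ two equal columns in the Jacobian $\Rightarrow$ rank $\le 2$,'' and only then invokes reality to discard the remaining $C_3$– and $C_4$–orbits.

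You instead bypass the full fixed-point table and argue directly that a \emph{real} point with stabilizer $|H|\ge 3$ forces $\nabla f_i(p)$ into a single isotypic piece $L_{\chi^{\,d_i-1}}\cap p^\perp$ of dimension $\le 2$, using equivariance of the gradient together with Euler's identity and the anisotropy $\langle p,p\rangle>0$. This is more representation-theoretic and makes the role of reality explicit (as your $[1:i:-1:-i]$ example nicely shows). It also recovers Lemma~\ref{repeat-coordinates} as the special case $\chi=\mathbf 1$: when two coordinates coincide the transposition lies in $H$ with trivial character, so all gradients land in the $H$-fixed subspace intersected with $p^\perp$. What the paper's approach buys is brevity and a single elementary lemma; what yours buys is a structural explanation that would port to other linear group actions, since nothing in your argument is specific to $S_4$ beyond the dimension count $\dim L_{\mathbf 1}\le 2$, $\dim L_\chi\le 1$. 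Two small remarks: the parenthetical example ``every symmetric surface is singular at $[1{:}1{:}1{:}1]$'' actually illustrates the $\chi=\mathbf 1$ case (where $L_{\mathbf 1}\cap p^\perp=0$) rather than the even-degree $\chi\ne\mathbf 1$ case you place it next to; and the inequality $\dim L_\mu\le 1$ for nontrivial $\mu$ is only needed for $\mu=\chi$, where it follows at once since in each of your listed cases $p$ visibly spans $L_\chi$.
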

 It is clear that our methods can be used to obtain similar results in higher dimensions, although their effectiveness decreases  due to the rapidly growing size and complexity of the symmetric group.

Related work includes an equivariant version of B\'ezout's theorem for the conjugation action shown in \cite{C2equivar} using different methods to obtain information on real points of generic hypersurfaces. Enriched equivariant counts of different enumerative problems were also carried out in \cite{brazelton2024equivariant} and \cite{bethea2024enrichedcountnodalorbits}.

Our results, particularly Theorems \ref{p2intersection} and \ref{p2realpoints}, point towards the possibility of a more general enumerative theory counting the real loci of hypersurface intersections. We expect an equivariant characteristic class computation that agrees with our results. This paper provides a first step towards establishing such a theory and provides concrete examples for future work.


\subsection*{Notation} We adopt standard notation for writing isomorphism classes of $S_{n}$-sets as elements in the Burnside ring of $S_n$, although we will not use this technology further. More specifically, we write the isomorphism class of an $S_n$-set as a disjoint union of isomorphism classes of $S_n$-orbits, using $+$ to represent disjoint union. Isomorphism classes of orbits are written as $[S_n/H]$, where $H$ is the stabilizer (isotropy) subgroup of a point in the orbit ($H$ is determined up to conjugation). For example, 
\[
2[S_3/C_2] + [S_3/C_3]
\]
represents a disjoint union of two orbits isomorphic to $S_3/C_2$ and one orbit isomorphic to $S_3/C_3$, for a total of eight elements.

 Throughout, we use $\P^n$ to denote $\P_\C^n$. 

\subsection*{Acknowledgements}
This project was carried out while the authors were participants of the Institute for Advanced Study/Park City Mathematics Institute 2024 Summer School in Motivic Homotopy Theory. The authors thank Thomas Brazelton and Candance Bethea for insightful discussions. They also thank J.D. Quigley and Jackson Morris for coordinating the PCMI Undergraduate Experimental Math Lab where this project began.



\section{Transverse intersections of symmetric hypersurfaces}

We first recall the main theorem of \cite{brazelton2024equivariant}, which they call \emph{equivariant conservation of number}.

\begin{thm}[\cite{brazelton2024equivariant}, {Equivariant Conservation of Number}] \label{thm:brazelton-equivariant-conservation}
    Let $E \to M$ be an equivariant complex rank $n$ vector bundle over a smooth $G$-manifold of dimension $n$, and let $\sigma,\sigma' \colon M \to E$ be any two sections whose zeros are isolated and simple. Then $Z(\sigma)$ and $Z(\sigma')$ are isomorphic as finite $G$-sets. In other words, the $G$-orbits of the zeros are independent of the choice of section.
\end{thm}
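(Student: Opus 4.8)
The plan is to reduce this equivariant statement to the classical, non-equivariant conservation of number applied on the fixed submanifolds, and then to reassemble the $G$-set from that data by inverting the table of marks; this deliberately avoids equivariant transversality, which can fail near the fixed loci. For our purposes we may assume $G$ is finite (with $G = S_{n+1}$ the only case needed), $M$ is closed, and $G$ acts by holomorphic automorphisms of a complex structure on $M$ (for instance a projective space with coordinates permuted). Then for each subgroup $H\le G$ the fixed locus $M^H\subseteq M$ is a closed complex, hence oriented, submanifold; averaging an $H$-invariant Hermitian metric splits $E|_{M^H}$ equivariantly, and the summand $E^H\to M^H$ of $H$-fixed vectors has locally constant rank, so it is a genuine complex vector bundle. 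The key local point is that a zero $x$ of $\sigma$ lying in $M^H$ is automatically a \emph{simple} zero of the restriction $\sigma|_{M^H}\colon M^H\to E^H$: the isomorphism $D\sigma_x\colon T_xM\to E_x$ is $H$-equivariant, so it restricts to an isomorphism $(T_xM)^H = T_x(M^H)\xrightarrow{\sim}(E_x)^H = E^H_x$; in particular the rank of $E^H$ equals $\dim M^H$ there, and any component of $M^H$ where this fails contains no zeros of $\sigma$.

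First I would observe that $Z(\sigma)^H = Z(\sigma)\cap M^H$ is exactly the zero set of $\sigma|_{M^H}$, viewed as a section of $E^H$, and that each of these zeros is simple. Since a simple zero of a section of a complex bundle has local index $+1$, classical conservation of number (Poincar\'e--Hopf for bundle sections) shows that $|Z(\sigma)^H|$ is a section-independent integer: the Euler number $\langle e(E^H),[M^H]\rangle$ on the components where the ranks match, and $0$ on the others. Hence $|Z(\sigma)^H| = |Z(\sigma')^H|$ for every $H\le G$. I would then invert the table of marks: writing the finite $G$-set as $Z(\sigma)\cong\sum_{[K]}a_{[K]}[G/K]$, one has $|Z(\sigma)^H| = \sum_{[K]}a_{[K]}\,|(G/K)^H|$, and the matrix $\big(|(G/K)^H|\big)_{[H],[K]}$ is triangular for the subconjugacy order with diagonal entries $|(G/H)^H| = |N_G(H)/H|\ge 1$, hence invertible over $\mathbb{Q}$. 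Therefore the multiplicities $a_{[K]}$ are determined by the collection $\{|Z(\sigma)^H|\}_{[H]}$, which does not depend on the section, and we conclude $Z(\sigma)\cong Z(\sigma')$ as $G$-sets.

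The step I expect to be the main obstacle --- essentially the only one that is not formal --- is the fixed-point bookkeeping behind the first paragraph: verifying that each $M^H$ is a submanifold and $E^H$ a genuine subbundle, that simpleness of zeros is really inherited by the restricted sections so that the non-equivariant theorem applies stratum by stratum, that components of $M^H$ of the wrong dimension contribute nothing, and that closedness of $M$ keeps the counts finite and cobordism-invariant. Once those points are secured, the M\"obius-type inversion against the table of marks is purely formal and finishes the proof.
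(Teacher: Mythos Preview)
The paper does not actually prove this statement; it is quoted from \cite{brazelton2024equivariant} as a black box and used as input to the rest of the paper, so there is no in-paper argument to compare your proposal against.

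That said, your outline is a correct and standard route to the result under the hypotheses you impose ($G$ finite, $M$ closed and complex with $G$ acting holomorphically). The two substantive points --- that an $H$-equivariant isomorphism $D\sigma_x\colon T_xM\to E_x$ restricts to an isomorphism on $H$-fixed parts, so $\sigma|_{M^H}$ is again a section of the correct rank with simple zeros, and that injectivity of the mark homomorphism recovers the $G$-set from the numbers $|Z(\sigma)^H|$ --- are exactly the right ingredients. Your restriction to the holomorphic setting is not incidental: it is what guarantees every simple zero has local index $+1$, so that the \emph{unsigned} count $|Z(\sigma)^H|$ (rather than a signed count) equals the Euler number of $E^H$ on the relevant components of $M^H$. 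Without that, two smooth sections with simple zeros can have zero sets of different cardinalities, and the statement as literally written would fail. Since the paper only applies the theorem to algebraic sections of $\bigoplus_i\mathcal{O}(d_i)$ on $\mathbb{P}^n$ with $S_{n+1}$ permuting coordinates, your hypotheses cover everything that is actually used.
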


\begin{defn}
    A section $\sigma \colon M \to E$ has \emph{isolated} zeros if $Z(\sigma) \subseteq M$ is discrete. The zeros of $\sigma$ are \emph{simple} if the Jacobian determinant of $\sigma$ does not vanish at any point of $Z(\sigma)$.
\end{defn}



We now consider projective space $\P^n$ and let $d_1,\ldots, d_n\geq 1$ be integers. We may rephrase the classical B\'ezout's theorem as follows.

\begin{thm}[B\'ezout's Theorem]
    Let $f_1\in H^0(\P^n, \cO_{\P^n}(d_1)),\ldots, f_n \in H^0(\P^n, \cO_{\P^n}(d_n))$ be sections in general position. Then the section $(f_1,\ldots, f_n) \in H^0(\P^n, \oplus_{i=1}^n \cO_{\P^n}(d_i))$ has $d_1\ldots d_n$ zeroes, all of multiplicity 1.
\end{thm}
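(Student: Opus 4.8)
The plan is to deduce this from Theorem~\ref{thm:brazelton-equivariant-conservation} applied to the trivial group $G=\{e\}$, together with one explicit computation. We take $M=\P^n$, a complex manifold of complex dimension $n$, and $E=\bigoplus_{i=1}^{n}\cO_{\P^n}(d_i)$, a complex vector bundle of rank $n$, so that the hypotheses of the theorem are met. We interpret ``in general position'' to mean precisely that the section $\sigma=(f_1,\ldots,f_n)$ has isolated and simple zeros in the sense defined above; equivalently, $V(f_1,\ldots,f_n)$ is finite and the $n$ hypersurfaces $V(f_i)$ meet transversely at each of its points. Granting that such sections exist, Theorem~\ref{thm:brazelton-equivariant-conservation} shows that $\#Z(\sigma)$ is independent of the choice of such $\sigma$, so it suffices to produce one convenient section, count its zeros, and observe that they all have multiplicity one (which is immediate once they are simple).

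For the convenient section, fix for each $i$ a collection of $d_i$ linear forms $\ell_{i1},\ldots,\ell_{id_i}$, chosen generically, and put $f_i=\prod_{k=1}^{d_i}\ell_{ik}\in H^0(\P^n,\cO_{\P^n}(d_i))$. Writing $H_{ik}=V(\ell_{ik})$ for the corresponding hyperplanes, we have
\[
Z(\sigma)=V(f_1,\ldots,f_n)=\bigcup_{(k_1,\ldots,k_n)}\bigl(H_{1k_1}\cap\cdots\cap H_{nk_n}\bigr),
\]
where the union ranges over the $d_1\cdots d_n$ tuples with $1\le k_i\le d_i$. For a generic choice of the $\ell_{ik}$, any $n$ of the resulting hyperplanes meet in a single point and no point lies on $n+1$ of them, so this is a disjoint union of exactly $d_1\cdots d_n$ points.

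It remains to check simplicity at each such point $p=H_{1k_1}\cap\cdots\cap H_{nk_n}$. In a local trivialization of $E$ near $p$ the section is just $(f_1,\ldots,f_n)$, and since $\ell_{ik}(p)\ne 0$ for $k\ne k_i$ we get $df_i|_p=\bigl(\prod_{k\ne k_i}\ell_{ik}(p)\bigr)\,d\ell_{ik_i}|_p$, a nonzero scalar multiple of $d\ell_{ik_i}$. For generic hyperplanes the covectors $d\ell_{1k_1},\ldots,d\ell_{nk_n}$ form a basis of $T_p^*\P^n$, so the Jacobian determinant of $\sigma$ at $p$ does not vanish; thus every zero of this $\sigma$ is simple. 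Applying Theorem~\ref{thm:brazelton-equivariant-conservation} with $G=\{e\}$ now gives that every section in general position has exactly $d_1\cdots d_n$ zeros, each of multiplicity one. As a cross-check, one may instead identify $\#Z(\sigma)$ with the Euler number $\int_{\P^n}c_n(E)=\int_{\P^n}\prod_{i=1}^{n}c_1(\cO_{\P^n}(d_i))=\prod_{i=1}^{n}d_i$.

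The main obstacle is the genericity bookkeeping: one must verify that sections with isolated simple zeros exist at all (otherwise the conservation statement has no content here) and that the product-of-hyperplanes section is among them. Both reduce to the elementary facts that $n$ generic hyperplanes in $\P^n$ meet transversely in one point and that the finitely many intersection points arising above can be arranged to be pairwise distinct; these are routine but deserve a careful statement.
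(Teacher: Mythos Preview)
The paper does not actually prove this statement: it is introduced with the phrase ``We may rephrase the classical B\'ezout's theorem as follows'' and is simply stated as background, with no proof given. It then serves as an input (together with Theorem~\ref{thm:brazelton-equivariant-conservation}) to the Symmetric B\'ezout's Theorem that follows.

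Your proposal, by contrast, supplies an actual proof, and it is essentially correct. Deducing the count from Theorem~\ref{thm:brazelton-equivariant-conservation} with $G=\{e\}$ is a clean strategy: conservation reduces the problem to a single explicit computation, and your product-of-hyperplanes section does the job. The simplicity check is fine; the key point, which you state, is that at each intersection point the only surviving term in $df_i$ is a nonzero scalar times $d\ell_{ik_i}$, and transversality of the $n$ chosen hyperplanes at that point is exactly the statement that these covectors are independent. Your closing Chern-class remark is also correct and gives an alternative endpoint. The one thing worth tightening is the phrase ``in a local trivialization of $E$ near $p$ the section is just $(f_1,\ldots,f_n)$'': strictly speaking the local representative differs from $(f_1,\ldots,f_n)$ by invertible transition functions, but since these do not affect whether the Jacobian determinant vanishes, the conclusion stands.

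So there is nothing in the paper to compare against; you have written a proof where the paper chose to invoke the classical result. Note the mild logical inversion: the paper treats B\'ezout as prior knowledge feeding into the equivariant story, whereas you are deriving B\'ezout \emph{from} the equivariant conservation theorem (specialized to the trivial group). Both directions are legitimate, but be aware that in the paper's narrative this theorem is an assumption, not a consequence.
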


We generalize this to the symmetric setting. To that end, let $S_{n+1}$ act on $\P^n$ by permuting coordinates. This acts linearly, so extends to a compatible group action on all of the line bundles $\cO(k)$.
From Theorem \ref{thm:brazelton-equivariant-conservation} and B\'ezout's theorem we immediately obtain the following. 

\begin{thm}[Symmetric B\'ezout's Theorem]
    Let $f_1,\ldots, f_n$ be $S_{n+1}$-equivariant homogeneous polynomials in general position on $\P^n$ of degrees $d_1,\ldots, d_n$ respectively. Then their intersection $V(f_1,\ldots, f_n)$ is a finite $S_{n+1}$-set of points of size $d_1\cdots d_n$. Moreover, the $S_{n+1}$-isomorphism type of $V(f_1,\ldots, f_n)$ depends only on $d_1,\ldots, d_n$ and is independent of the generic section we chose. In other words, every intersection locus of transverse symmetric hypersurfaces of fixed degrees has the same $S_{n+1}$-isomorphism type.
\end{thm}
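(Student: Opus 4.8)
The plan is to repackage the $n$ symmetric polynomials as a single equivariant section of an equivariant vector bundle and then invoke Theorem~\ref{thm:brazelton-equivariant-conservation}; the real work is checking that the algebro-geometric hypotheses (transversality, general position) translate into the differential-topological hypotheses of that theorem.

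First I would build the equivariant bundle. Take $M=\P^n$, regarded as a smooth manifold of complex dimension $n$ on which the finite group $S_{n+1}$ acts by permuting homogeneous coordinates; since this action is induced by linear automorphisms of $\C^{n+1}$ it is by biholomorphisms, so $M$ is a smooth $S_{n+1}$-manifold. The action lifts to the tautological line bundle and hence to $\cO_{\P^n}(k)$ for every $k$, compatibly with tensor products and direct sums, so $E := \bigoplus_{i=1}^{n}\cO_{\P^n}(d_i)$ is an $S_{n+1}$-equivariant complex vector bundle of rank $n$ over $M$. An $S_{n+1}$-equivariant homogeneous polynomial $f_i$ of degree $d_i$ is exactly an invariant element of $H^0(\P^n,\cO_{\P^n}(d_i))$, and by equivariance of evaluation the tuple $\sigma := (f_1,\dots,f_n)$ is an $S_{n+1}$-equivariant section of $E$ with $Z(\sigma)=V(f_1,\dots,f_n)$.

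Next I would verify that ``general position'' gives precisely the regularity Theorem~\ref{thm:brazelton-equivariant-conservation} needs. Classical B\'ezout makes $Z(\sigma)$ finite, hence discrete, so the zeros are isolated and there are $d_1\cdots d_n$ of them counted with multiplicity. For simplicity of a zero $p$, trivialize $E$ near $p$ so that $\sigma$ becomes a holomorphic map $(s_1,\dots,s_n)$ from an affine chart to $\C^n$; here $s_i$ differs from $f_i$ by a nonvanishing holomorphic factor, so $ds_i|_p$ is a nonzero scalar multiple of $df_i|_p$, and therefore the Jacobian of $\sigma$ at $p$ is invertible if and only if $df_1|_p,\dots,df_n|_p$ are linearly independent, i.e. if and only if the hypersurfaces $V(f_1),\dots,V(f_n)$ meet transversely at $p$. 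Thus the transverse intersections are exactly the sections with isolated simple zeros.

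Given this dictionary, the theorem follows at once: any two generic $S_{n+1}$-equivariant tuples $\sigma,\sigma'$ of the same degrees $d_1,\dots,d_n$ have isolated and simple zeros, so Theorem~\ref{thm:brazelton-equivariant-conservation} supplies an isomorphism $Z(\sigma)\cong Z(\sigma')$ of finite $S_{n+1}$-sets; hence the $S_{n+1}$-isomorphism type of $V(f_1,\dots,f_n)$ depends only on $(d_1,\dots,d_n)$. I expect the only subtle point to be the bookkeeping in the previous paragraph: pinning down that ``$p$ is a simple zero of $\sigma$'' is \emph{equivalent} to transversality of the hypersurfaces at $p$ (not merely implied by it), so that the conclusion genuinely applies to every transverse configuration of the given degrees rather than to a single generic one. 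Everything else is formal.
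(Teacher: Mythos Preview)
Your proposal is correct and follows exactly the route the paper takes: the paper simply remarks that the result is immediate from Theorem~\ref{thm:brazelton-equivariant-conservation} applied to the equivariant bundle $\bigoplus_{i=1}^n\cO_{\P^n}(d_i)$ together with the classical B\'ezout count, and your write-up is a careful unpacking of that one sentence. The extra care you take in identifying simplicity of a zero of $\sigma$ with transversality of the hypersurfaces is a welcome elaboration, but there is no divergence in strategy.
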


We note that the symmetric group action and conjugation action commute. This means that the orbit of any real point consists entirely of real points. We use this later to constrain the number of real points in the transverse intersection of real symmetric hypersurfaces.

\section{The case of $\P^2$}

In this section, we classify the possible $S_3$-orbit types of the intersection locus of two transverse symmetric curves in $\P^2$ and examine the consequences. Our main strategy for performing this classification is to analyze the possible points of intersection based on their isotropy subgroups. The possible nontrivial isotropy subgroups of $S_3$ are
\[S_3, C_3, C_2.\]

\subsection{Points on transverse intersections}

We begin by considering the fixed points under the whole $S_3$ action. Let $[a:b:c]$ be such a point. Since one of $a,b,c$ are nonzero, then all of them are nonzero, so we can write the point as $[a : b : 1]$. Then $[a : b : 1] = [b : a : 1]$ so $a = b$. Thus the point is $[a : a : 1]$. Further, we have $[a : a : 1] = [a : 1 : a]$ so $a = 1$, and thus the only fixed point under $S_3$ is $[1:1:1]$.
\begin{lem}\label{ones}
    If $f$ is a symmetric polynomial that vanishes on $[1:\cdots:1]\in \P^{n-1}$, then it has a singularity at $[1:\cdots :1]$. Note that this is the only $S_n$-fixed point in $\P^{n-1}$, so there are no $S_n$-fixed points in transverse intersections.
\end{lem}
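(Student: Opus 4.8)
The plan is to combine the symmetry of $f$ with Euler's identity for homogeneous polynomials. Write $f \in \C[x_1,\dots,x_n]$ for the homogeneous polynomial of degree $d \ge 1$ defining the hypersurface, with $f(1,\dots,1) = 0$, and recall that $[1:\cdots:1]$ is a singular point of $V(f) \subseteq \P^{n-1}$ precisely when all the partials $\partial f/\partial x_i$ vanish at $(1,\dots,1)$ --- Euler's identity guarantees that this affine criterion is equivalent to the projective one. So it suffices to show $\tfrac{\partial f}{\partial x_i}(1,\dots,1) = 0$ for every $i$.

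First I would observe that since $f$ is invariant under permuting its variables and the point $(1,\dots,1)$ is fixed by every coordinate permutation, the $n$ partial derivatives $\tfrac{\partial f}{\partial x_1},\dots,\tfrac{\partial f}{\partial x_n}$ all take one and the same value at $(1,\dots,1)$; call it $c$. (Concretely, $f(x_{\sigma(1)},\dots,x_{\sigma(n)}) = f(x_1,\dots,x_n)$ for every $\sigma \in S_n$, and differentiating this identity and evaluating at the $S_n$-fixed point $(1,\dots,1)$ merely permutes the list of partials.) Next I would apply Euler's identity $\sum_{i=1}^n x_i \,\tfrac{\partial f}{\partial x_i} = d\, f$ and evaluate both sides at $(1,\dots,1)$: the left-hand side becomes $n c$ while the right-hand side becomes $d\, f(1,\dots,1) = 0$. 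Since $n \neq 0$, this forces $c = 0$, so every partial of $f$ vanishes at $(1,\dots,1)$, i.e. $V(f)$ is singular at $[1:\cdots:1]$.

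For the concluding remark: if $[1:\cdots:1]$ lay in a transverse intersection $V(f_1,\dots,f_{n-1})$, then each $f_j$ would vanish there, hence by the above each gradient $\nabla f_j$ would vanish there, making the Jacobian of the section $(f_1,\dots,f_{n-1})$ the zero matrix at $[1:\cdots:1]$; this contradicts the zero being simple, so $[1:\cdots:1]$ cannot occur in a transverse intersection. The whole argument is elementary; the only steps requiring slight care are the bookkeeping that the partials coincide at the symmetric point and the observation that Euler's identity makes the condition $\nabla f = 0$ the correct notion of singularity in projective space. I do not anticipate a genuine obstacle.
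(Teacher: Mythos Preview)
Your proof is correct and follows essentially the same approach as the paper: both arguments use the symmetry of $f$ to conclude all partials coincide at $(1,\dots,1)$, then invoke Euler's identity to force that common value to be zero. Your write-up is slightly more detailed (e.g.\ spelling out the concluding remark about the Jacobian), but the underlying argument is identical.
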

\begin{proof}
    Recall Euler's lemma for homogeneous polynomials:
    \begin{equation*}
        \frac{\partial f}{\partial X_1} X_1 + \cdots + \frac{\partial f}{\partial X_n} X_n = \deg f \cdot f.
    \end{equation*}
    Since $f$ is symmetric, 
    \[
    \frac{\partial f}{\partial X_i}(1,\dots, 1) = \frac{\partial f}{\partial X_j}(1.\dots,1)
    \]
    for any $i,j$. Thus, for any $i$, Euler's lemma gives
    \[
    n \frac{\partial f}{\partial X_i}(1,\ldots, 1) = \deg f \cdot f(1,\ldots, 1).
    \]
    Hence, if $f$ vanishes at $[1:\cdots:1]$, then all its partial derivatives vanish there too and $f$ is singular at $[1:\cdots:1]$.
\end{proof}

 From this, we deduce that $[1:1:1]$ can never be in the intersection locus.

Next we consider fixed points $[a:b:c]$ under $C_2$, which is generated by a transposition. Without loss of generality, we may assume the transposition swaps the first two coordinates, so $[a:b:c] = [b:a:c]$. First, if $c = 0$, then the point assumes the form $[a:1:0] = [1/a:1:0]$. Hence in this case $a = 1/a$ so $a^2 = 1$ and thus $a = \pm 1$. Second, if $c \neq 0$, then the point is in the form $[a:b:1] = [b:a:1]$ in which case $a = b$. It follows that all points with isotropy group $C_2$ are (in the $S_3$-orbit of) one of the following points
    \[[-1:1:0], [1:1:0], [a:a:1].\]

 The following two lemmas show that of these, only $[-1:1:0]$ can appear in the intersection of transverse symmetric curves.

\begin{lem}{\label{diagonal-line}}
    Fix $f$ a symmetric homogeneous polynomial with corresponding curve $V(f)$ in $\P^2$. If $f$ has a simple zero at $P = [a:a:1]$, then the tangent line to $V(f)$ at $P$ is given by $X + Y - 2aZ=0$. In particular, two symmetric curves intersecting at $P$ are not transverse.
\end{lem}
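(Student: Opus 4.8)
The plan is to exploit the symmetry of $f$ to pin down the gradient of $f$ at $P = [a:a:1]$ up to scaling, and then observe that this gradient is independent of $f$ — so any two symmetric curves through $P$ have the same tangent line there and hence meet non-transversely.

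First I would work in the affine chart $Z = 1$ with coordinates $X, Y$, so $P$ becomes the point $(a,a)$. Write $\partial_X f$, $\partial_Y f$, $\partial_Z f$ for the partials evaluated at $(a,a,1)$. Since $f$ is symmetric under swapping $X$ and $Y$, differentiating the identity $f(X,Y,Z) = f(Y,X,Z)$ and evaluating at the diagonal point $(a,a,1)$ gives $\partial_X f = \partial_Y f$; call this common value $\lambda$. Euler's identity $X\partial_X f + Y\partial_Y f + Z\partial_Z f = (\deg f) f$ evaluated at $(a,a,1)$, using $f(a,a,1) = 0$ (since $P \in V(f)$), yields $2a\lambda + \partial_Z f = 0$, i.e. $\partial_Z f = -2a\lambda$. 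Because the zero at $P$ is simple, the gradient is nonzero, so $\lambda \neq 0$ (if $\lambda = 0$ then $\partial_Z f = 0$ too and the whole gradient vanishes). Thus the tangent line to $V(f)$ at $P$ in the affine chart is $\lambda(X - a) + \lambda(Y - a) = 0$, i.e. $X + Y - 2a = 0$; homogenizing gives $X + Y - 2aZ = 0$ as claimed.

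For the "in particular" statement: if $g$ is another symmetric homogeneous polynomial with $V(g)$ passing through $P$ and a simple zero there, the same computation shows its tangent line at $P$ is also $X + Y - 2aZ = 0$. So $V(f)$ and $V(g)$ share a tangent line at $P$, which means their intersection at $P$ is not transverse. (If either curve has a non-simple zero at $P$, the intersection is automatically non-transverse for a different reason, so the conclusion holds in all cases.)

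I do not anticipate a genuine obstacle here; the only point requiring a little care is justifying $\lambda \neq 0$, which follows from simplicity of the zero, and being careful that the tangent line is well-defined (i.e. the gradient doesn't vanish) — again guaranteed by simplicity. The computation is otherwise a direct application of Euler's identity together with the symmetry relation on the partials, exactly as in the proof of Lemma \ref{ones}.
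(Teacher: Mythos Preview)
Your proof is correct and follows essentially the same approach as the paper: dehomogenize in the chart $Z=1$, use the symmetry $f(X,Y,Z)=f(Y,X,Z)$ to equate the $X$- and $Y$-partials at $(a,a,1)$, and read off the tangent line. Your use of Euler's identity to compute $\partial_Z f = -2a\lambda$ and thereby justify $\lambda \neq 0$ from simplicity is a slightly more explicit version of the paper's remark that the derivatives are nonzero, but the argument is otherwise the same.
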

\begin{proof} 
    We consider the dehomegenization of $f$. As $P$ is a simple point, we have the following formula (\cite{fulton2008algebraic} p. 33) for the tangent line to $P$ on $V(f)$.
    \begin{align*}
        \frac{\partial f}{\partial x}(P)(x-a) + \frac{\partial f}{\partial y}(P)(y-a) &= 0.
    \end{align*}
    Note that $\frac{\partial f}{\partial x}(P) = \frac{\partial f}{\partial y}(P)$ since $f$ is symmetric. The derivatives are nonzero, since we assumed that $V(f)$ is nonsingular at $P$. Hence the equation of the line is equivalent to $(x-a) + (y-a) = 0$, so we conclude. 

    For the latter statement, if either of the symmetric curves has a multiple zero at $P$ then we are done. If they both have a simple zero at $P$, we showed they have the same tangent line at $P$. Hence their intersection at $P$ has multiplicity at least 2.
\end{proof}

\begin{lem}
    Let $f$ be a symmetric homogeneous polynomial. If $f$ has a simple zero at the point $[1:1:0]$, then the tangent line to $V(f)$ at $[1:1:0]$ is given by $Z=0$.
    In particular, two symmetric curves intersecting at $[1:1:0]$ cannot be transverse.
\end{lem}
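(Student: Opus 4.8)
The plan is to follow the proof of Lemma~\ref{diagonal-line} almost verbatim, replacing the diagonal symmetry $X\leftrightarrow Y, Z\mapsto Z$ used there by the observation that $[1:1:0]$ is fixed by the transposition swapping the first two coordinates. The crux is that this symmetry together with Euler's identity pins down the gradient of $f$ at $[1:1:0]$ up to a scalar in the $Z$-direction. Since $f$ is symmetric we have $f(X,Y,Z)=f(Y,X,Z)$; differentiating in $X$ and evaluating at $(1,1,0)$ yields $\tfrac{\partial f}{\partial X}(1,1,0)=\tfrac{\partial f}{\partial Y}(1,1,0)$.

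Next I would invoke Euler's identity
\[
X\frac{\partial f}{\partial X}+Y\frac{\partial f}{\partial Y}+Z\frac{\partial f}{\partial Z}=(\deg f)\cdot f
\]
at the point $(1,1,0)$. Because $f(1,1,0)=0$ and the $Z$-coordinate vanishes, this collapses to $2\,\tfrac{\partial f}{\partial X}(1,1,0)=0$, so $\tfrac{\partial f}{\partial X}(1,1,0)=\tfrac{\partial f}{\partial Y}(1,1,0)=0$. As $[1:1:0]$ is a simple (hence nonsingular) point of $V(f)$, the gradient of $f$ there is nonzero, which forces $\tfrac{\partial f}{\partial Z}(1,1,0)\neq 0$. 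Now the tangent line to $V(f)$ at the simple point $[1:1:0]$ is the projectivization of the affine tangent line obtained by dehomogenizing with respect to the nonzero coordinate $X$, namely $\tfrac{\partial f}{\partial X}(1,1,0)\,X+\tfrac{\partial f}{\partial Y}(1,1,0)\,Y+\tfrac{\partial f}{\partial Z}(1,1,0)\,Z=0$; by the above this is exactly $Z=0$.

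For the final assertion I would reason exactly as in Lemma~\ref{diagonal-line}: if one of the two symmetric curves has a non-simple (singular) zero at $[1:1:0]$, the intersection multiplicity there is already at least $2$; otherwise both curves have $Z=0$ as their tangent line at $[1:1:0]$, so they share a tangent line there and the intersection multiplicity is again at least $2$ (\cite{fulton2008algebraic}). In either case the two curves fail to be transverse.

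I do not anticipate a substantive obstacle — the argument is a two-line computation with Euler's identity. The only points meriting a little care are using the homogeneous (equivalently, $X$-dehomogenized) form of the tangent-line equation rather than a $Z$-dehomogenized one, since $[1:1:0]$ lies on the line $Z=0$, and noting that $\tfrac{\partial f}{\partial Z}(1,1,0)\neq 0$ is precisely what the simplicity hypothesis buys us.
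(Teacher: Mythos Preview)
Your argument is correct and essentially identical to the paper's: both use symmetry to get $\tfrac{\partial f}{\partial X}(1,1,0)=\tfrac{\partial f}{\partial Y}(1,1,0)$, then Euler's identity at $(1,1,0)$ (where the $Z$-term drops out) to force both to vanish, leaving the projective tangent line $\tfrac{\partial f}{\partial Z}(1,1,0)\,Z=0$; the transversality consequence is argued the same way. The only cosmetic difference is that the paper phrases the Euler step as ``setting $Z=0$ and applying Euler's lemma in two variables,'' while you apply the three-variable identity directly and additionally make explicit that simplicity forces $\tfrac{\partial f}{\partial Z}(1,1,0)\neq 0$.
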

\begin{proof}
     Let us assume $f$ has a simple zero at $[1:1:0]$. Setting $Z=0$, applying Euler's lemma in two variables, and using the symmetry of $f$, we have 
    \[
        \frac{\partial f}{\partial X} (1,1,0) = \frac{\partial f}{\partial Y} (1,1,0) = 0.
    \] The tangent line to $V(f)$ at $[1:1:0]$ is given by
    \[
        \frac{\partial f}{\partial X}(1,1,0) X + \frac{\partial f}{\partial Y}(1,1,0)Y +\frac{\partial f}{\partial Z}(1,1,0) Z = 0
    \] which is just
    \[
        \frac{\partial f}{\partial Z}(1,1,0) Z = 0.
    \]
    

     In particular, for two symmetric curves passing through $[1:1:0]$, either at least one is singular there, or they share a tangent line, so the intersection multiplicity is at least 2.
\end{proof}

It remains to consider the $C_3$-invariant points. The $C_3$ subgroup is generated by a 3-cycle, which corresponds to cycling the coordinates. The coordinates of a fixed point must all be non-zero, so the point can be expressed as $[a:b:1]$. Applying the action, we get the conditions $a=\frac 1 b = \frac b a$. Hence, $a^3=1$, $a=\frac 1 b$, $b^3=1$. Letting $\omega$ be a primitive cube root of unity, we see that the only possible points are $[1:1:1], [\omega : \omega^2: 1], [\omega^2 : \omega : 1]$. The first was ruled out by Lemma \ref{ones}.

Thus, we have determined the points with a given isotropy subgroup that may appear in an intersection of transverse symmetric curves. 
We summarize the results here:

\begin{table}[h]
    \begin{center}
    \bgroup
    \def\arraystretch{1.2}
    \begin{tabular}{|c|c|} \hline
        Isotropy Subgroup & Fixed Points \\ \hline\hline
        \text{Trivial} & all points
         \\\hline
        $C_2$ & $[-1 : 1 : 0]$ \\\hline
         $C_3$ & $[\omega : \omega^2 : 1]$ \\ \hline
        $S_3$ & none \\ \hline
    \end{tabular}
    \egroup
    \caption{Possible fixed points in transverse intersection in $\P^2$.}
    \label{table:P2-fixedpoints}
    \end{center}
\end{table}
 We list only one point in each $S_3$-orbit; for example $[\omega:\omega^2:1]$ is listed, so $[\omega^2, \omega, 1]$ is omitted. Any point with a given isotropy subgroup must be in the $S_3$-orbit of the point we provide.

\subsection{Consequences}

Table \ref{table:P2-fixedpoints} immediately implies the following corollary.

\begin{cor} \label{cor:uniqueness-of-certain-orbit-types}
    The intersection of any two transverse symmetric curves cannot have more than one orbit of type $[S_3/C_3]$, and cannot have more than one orbit of type $[S_3/C_2]$.
\end{cor}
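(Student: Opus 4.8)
The plan is to read Corollary \ref{cor:uniqueness-of-certain-orbit-types} directly off of Table \ref{table:P2-fixedpoints}, which enumerates the possible fixed points in a transverse intersection in $\P^2$. The key observation is that an $S_3$-orbit of type $[S_3/C_3]$ consists of two points, each having isotropy subgroup conjugate to $C_3$; since all $C_3$ subgroups of $S_3$ coincide (there is a unique cyclic subgroup of order $3$), such an orbit is precisely a pair of distinct $C_3$-fixed points. By the analysis summarized in the table, the only $C_3$-fixed points that can appear in a transverse intersection are $[\omega:\omega^2:1]$ and its $S_3$-translate $[\omega^2:\omega:1]$ — the point $[1:1:1]$ being excluded by Lemma \ref{ones}. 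These two points together form a single orbit of type $[S_3/C_3]$, so there is at most one such orbit.

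Next I would run the same argument for orbits of type $[S_3/C_2]$. Such an orbit consists of three points, each with isotropy a subgroup of order $2$, i.e. generated by a transposition. By the classification preceding the table, the only point with isotropy of order $2$ that can occur in a transverse intersection is (in the $S_3$-orbit of) $[-1:1:0]$; the candidate points $[1:1:0]$ and $[a:a:1]$ are excluded by Lemma \ref{diagonal-line} and the subsequent lemma, since at such a point two symmetric curves necessarily share a tangent line and so meet with multiplicity at least $2$. Therefore every point with nontrivial even isotropy in the intersection lies in the single $S_3$-orbit of $[-1:1:0]$, which is itself one orbit of type $[S_3/C_2]$; hence there can be at most one such orbit.

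The argument is essentially a bookkeeping consequence of the earlier lemmas, so there is no serious obstacle; the only point requiring a moment's care is the group-theoretic fact that distinct orbits of the same type would force distinct fixed points of the corresponding isotropy subgroup to appear, which combined with the rigidity of the fixed-point sets (a single $S_3$-orbit worth of candidates in each case) yields the bound. One should also note that an orbit of type $[S_3/C_3]$ or $[S_3/C_2]$ cannot degenerate or overlap with the (excluded) $S_3$-fixed point, so no subtlety arises from coincidences of points. With these remarks the corollary follows immediately from Table \ref{table:P2-fixedpoints}.
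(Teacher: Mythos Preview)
Your proposal is correct and follows exactly the paper's approach: the paper simply states that the corollary is an immediate consequence of Table~\ref{table:P2-fixedpoints}, and you have spelled out precisely why that is so. The only minor slip is the order in which you cite the lemmas excluding $[1:1:0]$ and $[a:a:1]$ (Lemma~\ref{diagonal-line} handles $[a:a:1]$ and the subsequent lemma handles $[1:1:0]$), but this does not affect the argument.
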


\begin{rmk}
    We have classified the points in an $[S_3/C_3]$-orbit as $[\omega:\omega^2:1]$ and $[\omega^2, \omega, 1]$. As a result, we know that an $[S_3/C_3]$-orbit must be complex and the conjugation action exchanges the two points in the orbit. 
    A similar observation holds true for an $[S_3/C_2]$-orbit, which must consist of the three real points $[-1:1:0], [-1:0:1]$, and $[0:-1:1]$.
\end{rmk}

\begin{thm} \label{thm:P2-nontransverse-1mod3}
    Let $C$ and $D$ be symmetric curves in $\P^2$ of degrees $d$ and $e$ respectively. If $d e \equiv 1 \bmod{3}$, then their intersection is not transverse.  
\end{thm}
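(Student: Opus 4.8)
The key numerical fact is that a transverse intersection of symmetric curves of degrees $d$ and $e$ has exactly $de$ points by the Symmetric Bézout Theorem, and these $de$ points are partitioned into $S_3$-orbits. By Table \ref{table:P2-fixedpoints}, the only orbit types that can occur are the free orbit $[S_3]$ (size $6$), the orbit $[S_3/C_3]$ (size $2$), and the orbit $[S_3/C_2]$ (size $3$); no fixed points of type $[S_3]$ (size $1$) appear. So I would argue purely by counting modulo $3$: any orbit of type $[S_3]$ or $[S_3/C_3]$ contributes $0 \bmod 3$ to the total, and an orbit of type $[S_3/C_2]$ contributes $0 \bmod 3$ as well since $3 \equiv 0$. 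Hence $de \equiv 0 \bmod 3$ necessarily, i.e. $de$ is always divisible by $3$ in a transverse intersection.

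**Key steps.** First I would invoke the Symmetric Bézout Theorem to get $|V(f,g)| = de$. Second, I would invoke Corollary \ref{cor:uniqueness-of-certain-orbit-types} together with Table \ref{table:P2-fixedpoints} to write $V(f,g)$ as a disjoint union of some number $k \ge 0$ of free $[S_3]$-orbits, at most one $[S_3/C_2]$-orbit, and at most one $[S_3/C_3]$-orbit (and no $[S_3]$-fixed points). Third, I would observe that $6k$, $3$, and $2$ are the possible orbit sizes, so $de = 6k + 3\epsilon_2 + 2\epsilon_3$ where $\epsilon_2, \epsilon_3 \in \{0,1\}$; reducing mod $3$ gives $de \equiv 2\epsilon_3 \equiv -\epsilon_3 \bmod 3$, so $de \bmod 3 \in \{0, 2\}$. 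In particular $de \not\equiv 1 \bmod 3$, so if $de \equiv 1 \bmod 3$ there is no valid way to partition the $de$ intersection points into allowed orbits, contradicting the existence of a transverse intersection.

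**Main obstacle.** There isn't a serious obstacle here — the statement is essentially a bookkeeping consequence of the earlier structural results. The only point requiring minor care is making sure every logically possible orbit type has been accounted for: one must use that $[1:\cdots:1]$ is excluded (Lemma \ref{ones}) so that there are genuinely no size-$1$ orbits, and that the classification of $C_2$- and $C_3$-fixed points in the preceding subsection is exhaustive, so that the only orbit sizes available are $2$, $3$, and $6$. Once that is in hand, the congruence $de \equiv 0$ or $2 \bmod 3$ is immediate, and the case $de \equiv 1 \bmod 3$ is vacuous, which is exactly the claim.
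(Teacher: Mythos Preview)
Your approach is essentially identical to the paper's: both assume a transverse intersection and count orbit sizes modulo $3$, using Table~\ref{table:P2-fixedpoints} and Corollary~\ref{cor:uniqueness-of-certain-orbit-types} to conclude that $de \equiv 0$ or $2 \bmod 3$. One slip to fix in your Plan paragraph: an $[S_3/C_3]$-orbit has size $2 \not\equiv 0 \bmod 3$, so the claim there that ``$de \equiv 0 \bmod 3$ necessarily'' is too strong---but your Key steps paragraph gets this right and reaches the correct conclusion $de \in \{0,2\} \bmod 3$.
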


\begin{proof}
    Suppose the intersection is transverse. The number of points in the $[S_3]$ and $[S_3/C_2]$ orbits is $0 \bmod{3}$. Since there is at most one orbit of type $[S_3/C_3]$ and none of type $[S_3/S_3]$, the total number of points must be 0 or 2 mod 3, a contradiction.
\end{proof}

\begin{thm}{\label{p2intersection}}
    Let $C$ and $D$ be transverse symmetric curves in $\P^2$ of degree $d$ and $e$, respectively. Then the symmetric orbit type of their intersection locus is determined by the product of the degrees modulo 6. It can be read off of the following table.
    \begin{center}
        \begin{tabular}{|c||c|c|c|c|c|c|}\hhline{|-||-|-|-|-|-|-|}
            $d \cdot e$ & $6k$ & $6k+1$ & $6k+2$ & $6k+3$ & $6k+4$ & $6k+5$ \\ \hhline{|-||-|-|-|-|-|-|}
            $S_3$-orbit type & $k[S_3]$ & \textcolor{red}{$\times$} & $k[S_3] + [S_3/C_3]$ & $k[S_3] +  [S_3/C_2]$ & \textcolor{red}{$\times$} & $k[S_3] +  [S_3/C_2] + [S_3/C_3]$ \\ \hhline{|-||-|-|-|-|-|-|}
        \end{tabular}
    \end{center}
    The symbol \textcolor{red}{$\times$} signifies that a transverse intersection with these degrees is not possible. 
\end{thm}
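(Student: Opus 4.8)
The plan is to assemble the structural facts already proved in this section and then finish with an elementary count. By the Symmetric B\'ezout's Theorem of Section~2, the $S_3$-isomorphism type of the intersection locus depends only on $d$ and $e$, and by ordinary B\'ezout it consists of exactly $de$ points, each of multiplicity $1$ since the intersection is transverse. So it suffices to determine which $S_3$-sets of cardinality $de$ are compatible with Table~\ref{table:P2-fixedpoints}.

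First I would pin down the shape of the orbit type. Transversality rules out any point of intersection multiplicity $>1$, and Lemma~\ref{ones} rules out the unique $S_3$-fixed point $[1:1:1]$, so by Table~\ref{table:P2-fixedpoints} every point of the intersection has isotropy group trivial, $C_2$, or $C_3$. By Corollary~\ref{cor:uniqueness-of-certain-orbit-types} there is at most one orbit of type $[S_3/C_2]$ and at most one of type $[S_3/C_3]$. Hence the intersection locus is isomorphic as an $S_3$-set to
\[
a[S_3] + b[S_3/C_2] + c[S_3/C_3]
\]
for some integer $a\geq 0$ and some $b,c\in\{0,1\}$, and counting points gives $6a+3b+2c = de$.

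Next I would dispose of the columns marked \textcolor{red}{$\times$}: if $de\equiv 1$ or $4\pmod 6$ then $de\equiv 1\pmod 3$, so Theorem~\ref{thm:P2-nontransverse-1mod3} already shows no transverse intersection of those degrees exists, which is exactly the claim of those entries. For the remaining residues I would solve $6a+3b+2c = de$ with $b,c\in\{0,1\}$: the four choices of $(b,c)$ give $3b+2c \in \{0,2,3,5\}$, four values that are pairwise distinct modulo $6$, so $de\bmod 6$ determines $(b,c)$ uniquely, and then $a = (de-3b-2c)/6 = k$ when $de = 6k+r$. Running through $r\in\{0,2,3,5\}$ reproduces precisely the four populated entries of the table. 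There is no genuine obstacle in this argument—the real work is contained in the lemmas of this section—but the one point worth stating carefully is the injectivity of $(b,c)\mapsto 3b+2c \bmod 6$, since that is what makes the orbit type \emph{determined} by $de\bmod 6$ rather than merely constrained by it.
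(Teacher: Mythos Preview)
Your proof is correct and follows essentially the same approach as the paper's one-line proof, which simply cites Corollary~\ref{cor:uniqueness-of-certain-orbit-types}, Theorem~\ref{thm:P2-nontransverse-1mod3}, and the orbit sizes modulo~$6$. Your opening appeal to Symmetric B\'ezout is unnecessary---your own injectivity observation for $(b,c)\mapsto 3b+2c\bmod 6$ already forces uniqueness of the orbit type, and indeed the paper remarks just after this theorem that it furnishes an \emph{independent} proof of Symmetric B\'ezout for~$\P^2$.
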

\begin{proof}
    This follows from Corollary \ref{cor:uniqueness-of-certain-orbit-types}, Theorem \ref{thm:P2-nontransverse-1mod3}, and the sizes of the possible orbits in Table \ref{table:P2-fixedpoints} modulo 6.
\end{proof}

Note that this gives an independent proof of Symmetric B\'ezout's theorem for $\P^2$. From the theorem we also obtain constraints on the counts of real points on intersections of transverse symmetric real curves in $\P^2$.
\begin{thm}{\label{p2realpoints}}
    Let $C$ and $D$ be real transverse symmetric curves in $\P^2$ of degrees $d$ and $e$ respectively, with corresponding polynomials $f$ and $g$. 
    \begin{enumerate}[(a)]
        \item If $de \equiv 3,5\bmod{6}$, then $V(f,g)$ has $3+6k$ real points for some integer $0\leq k < de/6$.
        \item If $de \equiv 0,2\bmod{6}$, then $V(f,g)$ has $6k$ real points for some integer $0\leq k\leq de/6$.
    \end{enumerate}
\end{thm}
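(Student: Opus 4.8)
The plan is to combine the orbit-type classification of Theorem~\ref{p2intersection} with the fact that the real locus of $V(f,g)$ is a union of $S_3$-orbits, and then simply tally the real points contributed by each orbit type. First I would record the key structural observation, already noted in Section~2: the conjugation action on $\P^2$ commutes with the $S_3$-action, so $\overline{\sigma\cdot p}=\sigma\cdot\bar p$ for every point $p$ and every $\sigma\in S_3$. Hence a point of an $S_3$-orbit is real if and only if every point of that orbit is real; equivalently, each $S_3$-orbit occurring in $V(f,g)$ is either entirely real or entirely non-real. In particular an orbit of type $[S_3]$ contributes either $6$ or $0$ real points.

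Next I would pin down the contributions of the non-free orbit types using the explicit point descriptions from the remark following Corollary~\ref{cor:uniqueness-of-certain-orbit-types}. An orbit of type $[S_3/C_3]$ is $\{[\omega:\omega^2:1],[\omega^2:\omega:1]\}$, a complex-conjugate pair of non-real points, so it contributes $0$ real points. An orbit of type $[S_3/C_2]$ is $\{[-1:1:0],[-1:0:1],[0:-1:1]\}$, all of which are real, so it contributes exactly $3$ real points.

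Finally I would run through the cases of Theorem~\ref{p2intersection}, writing $k=\lfloor de/6\rfloor$ for the number of $[S_3]$-orbits. If $de\equiv 3\bmod 6$ or $de\equiv 5\bmod 6$, the orbit type is $k[S_3]+[S_3/C_2]$ or $k[S_3]+[S_3/C_2]+[S_3/C_3]$; the $[S_3/C_2]$-orbit contributes $3$, the $[S_3/C_3]$-orbit (when present) contributes $0$, and the $k$ free orbits together contribute $6j$ for some $0\le j\le k$, so the real point count is $3+6j$. Since $de/6$ is not an integer in these residues, $j\le k=\lfloor de/6\rfloor$ is the same as $j< de/6$, which is exactly statement~(a). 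If $de\equiv 0\bmod 6$ or $de\equiv 2\bmod 6$, the orbit type is $k[S_3]$ or $k[S_3]+[S_3/C_3]$, so by the same bookkeeping the count is $6j$ with $0\le j\le k\le de/6$, which is statement~(b). The residues $de\equiv 1,4\bmod 6$ do not occur, by Theorem~\ref{thm:P2-nontransverse-1mod3}.

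I do not expect a genuine obstacle: the only real content is the commuting-actions observation and the two explicit orbit descriptions, all already in hand, and everything else is counting. The one point deserving care is matching the inequality $j\le\lfloor de/6\rfloor$ against the stated bounds, which is why (a) has the strict inequality $k<de/6$ (the residues $3,5$ force $de/6\notin\mathbb{Z}$) while (b) has $k\le de/6$ (with equality attainable only when $de\equiv 0\bmod 6$).
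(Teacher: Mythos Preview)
Your proposal is correct and follows essentially the same approach as the paper: use the orbit-type classification of Theorem~\ref{p2intersection}, note that each $S_3$-orbit is entirely real or entirely non-real (because conjugation commutes with the $S_3$-action), identify the $[S_3/C_2]$-orbit as real and the $[S_3/C_3]$-orbit as non-real from their explicit point descriptions, and count. Your write-up is in fact more careful than the paper's about matching the integer range for $j$ against the stated inequalities in (a) and (b), which is a welcome addition.
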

\begin{proof}
    In the situation of (a) we know there is a single $[S_3/C_2]$-orbit in $V(f,g)$. By our classification, the points in this orbit are real. Therefore we have at least $3$ real points. The $[S_3/C_3]$-orbit is complex (again by our classification). Also, the points in an $[S_3]$-orbit are either all real or all complex. We deduce (a). Similarly (b) is follows from  Theorem \ref{p2intersection}.
\end{proof}

\begin{ex}
    Let us compute the orbit type of a specific example. We take the intersection $V(X+Y+Z,X^5+Y^5+Z^5)$ of degree 1 and 5 symmetric curves. One can verify that the intersection consists of 5 points.
    The five points are:
    \[\scalemath{1}{\left[\frac{1}{2} \left(-1-i \sqrt{3}\right) : \frac{1}{2} \left(-1+i \sqrt{3}\right) : 1 \right], \quad \left[\frac{1}{2} \left(-1+i \sqrt{3}\right) : \frac{1}{2} \left(-1-i \sqrt{3}\right) : 1 \right]},\]
    \[[-1:0:1], \quad \scalemath{1}{[0:-1:1]}, \quad \scalemath{1}{[1:-1:0]}.\]
    This has orbit type $ [S_3/C_3] + [S_3/C_2]$ and 3 real points, in agreement with Theorems \ref{p2intersection} and \ref{p2realpoints}.
\end{ex}

\section{Partial classification of orbit types in $\mathbb{P}^3$}

\begin{lem}{\label{repeat-coordinates}}
    Let $P=[a:b:c:d]\in \P^3$. If any two of $a,b,c$, or $d$ are equal, then any three symmetric polynomials $f_1,f_2,f_3$ vanishing at $P$ have intersection multiplicity greater than one at $P$.
\end{lem}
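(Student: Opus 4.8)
The plan is to prove the stronger statement that the Jacobian of $(f_1,f_2,f_3)$ is singular at $P$, so that $P$ is not a simple zero; this forces the intersection multiplicity at $P$ to exceed one (if $P$ is an isolated point of $V(f_1,f_2,f_3)$ this is the standard fact that multiplicity one is equivalent to the Jacobian being invertible, and otherwise the multiplicity is infinite). The only real input is the elementary remark that dehomogenizing a symmetric polynomial preserves symmetry in the surviving variables: if $f\in\C[X_1,X_2,X_3,X_4]$ is symmetric, then $f$ with $X_k$ set to $1$ is a symmetric polynomial in the other three variables, because the copy of $S_3$ inside $S_4$ fixing the index $k$ still acts by permuting them.

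First I would pin down a convenient affine chart. The claim is that whenever $P=[a:b:c:d]$ has two equal coordinates, one can choose an index $k$ whose coordinate at $P$ is nonzero and such that, among the coordinates of $P$ indexed by the three indices other than $k$, two are still equal. Indeed, say $X_i(P)=X_j(P)$ with $i\neq j$: if one of the two remaining indices has nonzero coordinate, take it as $k$ and keep $\{i,j\}$ as the equal pair; otherwise both remaining coordinates vanish, hence $X_i(P)=X_j(P)\neq 0$ since $P\neq 0$, and we take $k=i$, the two vanishing coordinates now providing the equal pair. (The fully diagonal point $[1:1:1:1]$ falls into the first case, and in any event is handled directly by Lemma \ref{ones}.)

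Next, in the chart $X_k=1$, let $g_m$ denote the dehomogenization of $f_m$ and $\bar P$ the image of $P$. By the remark above each $g_m$ is symmetric in the three affine variables, so in particular invariant under the transposition of the two variables $u,v$ in which $\bar P$ has equal coordinates. Differentiating the identity $g_m(\dots,u,\dots,v,\dots)=g_m(\dots,v,\dots,u,\dots)$ and evaluating at $\bar P$ yields $\tfrac{\partial g_m}{\partial u}(\bar P)=\tfrac{\partial g_m}{\partial v}(\bar P)$ for $m=1,2,3$. Hence the $3\times 3$ Jacobian of $(g_1,g_2,g_3)$ at $\bar P$ has two identical columns and vanishing determinant, so $P$ is not a simple zero of $(f_1,f_2,f_3)$ and the intersection multiplicity there is greater than one.

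I do not expect a genuine obstacle: this is exactly the gradient-symmetry argument of Lemma \ref{diagonal-line} and its companions in $\P^2$, now with one more variable and three hypersurfaces. The only two points requiring a little care are the chart selection above (a short case distinction, as indicated) and making the phrase ``intersection multiplicity greater than one'' precise so that it correctly covers both an isolated non-simple zero and a positive-dimensional component of the intersection passing through $P$.
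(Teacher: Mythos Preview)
Your proposal is correct and follows essentially the same argument as the paper: dehomogenize in a suitable chart, use symmetry to obtain two equal partial derivatives for each $f_m$, and conclude that the Jacobian at $P$ has two identical columns and hence vanishing determinant. Your chart-selection case distinction is in fact a bit more careful than the paper's bare ``without loss of generality $a=b$ and $d=1$,'' which tacitly assumes a nonzero coordinate outside the equal pair.
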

\begin{proof}
    Without loss of generality we may assume $a=b$ and $d=1$, so we have the point $P = [a,a,c,1]$. Dehomogenize with respect to the last variable to obtain symmetric polynomials $f_1,f_2,f_3$ with variables $x_0,x_1,x_2$. Since the $f_i$ are symmetric, we obtain
    \[
    \frac{\partial f_i}{\partial x_0}(a,a,c) = \frac{\partial f_i}{\partial x_1} (a,a,c)
    \]
    for all $i$. Let $a_{i,j} = \frac{\partial f_i}{\partial x_j} (a,a,c)$, so the Jacobian matrix at $P$ is 
    \[
    \begin{pmatrix}
        a_{1,0} & a_{1,0} & a_{1,2} \\
        a_{2,0} & a_{2,0} & a_{2,2} \\
        a_{3,0} & a_{3,0} & a_{3,2} \\
    \end{pmatrix}
    \]
    which has rank at most 2. Thus the intersection multiplicity must be greater than 1 at the point.
\end{proof}

    There are 11 conjugacy classes of subgroups in $S_4$, which we describe below.
    For each conjugacy class of subgroups of $S_4$, we list the forms that fixed points can take (up to permutation of coordinates, or equivalently up to choice of subgroup in the conjugacy class). We mark points that cannot appear in a transverse intersection 
    in \singular{red} (using Lemma \ref{repeat-coordinates}).
\begin{enumerate}
    \item The trivial subgroup. Everything is fixed.
    \item $C_2^o=\langle (1 \;2)\rangle$, the odd conjugacy class of cylic subgroups of order 2.
    \begin{itemize}
        \item    Fixed points: \singular {$[a : a : c : 1]$, $[a : a : 1 : 0]$ or $[\pm 1 : 1 : 0 : 0]$}.
    
    \begin{proof}
        Invariance under this action means \[[a : b :c :d]= [b :a :c : d].\] If $d$ is nonzero, then forces $b = a$. If $d = 0$ and $c$ is nonzero, then similarly $b = a$. Otherwise, both $a$ and $b$ must be nonzero. Normalizing $b$ to be $1$, we have that $a = 1/a$ and so $a = \pm 1$.
    \end{proof}
    \end{itemize}
    
    \item $C_2^e = \langle (1\;2) (3\;4)\rangle$, the even conjugacy class of cyclic subgroups of order 2.
    \begin{itemize}
        \item Fixed points: $[a : - a : - 1 : 1]$, $\singular{[a : a : 1 : 1]}$, $\singular{[- 1 : 1 : 0 : 0]}$, and $\singular{[1 : 1 : a : a]}$.
        \begin{proof}
            Let $[a:b:c:d]$ be a fixed point. Either both $a$ and $b$ are nonzero or both $c$ and $d$ are nonzero. Since the cases are symmetric, we analyze only the latter case. Then the point maybe be normalized to be $[a:b:c:1]$, and being fixed implies that $[a:b:c:1] = [b/c : a/c : 1/c : 1]$. This implies that $c = \pm 1$, so $a=b/c$ means $b = \pm a$.
        \end{proof}
    \end{itemize}    

    \item $C_4= \langle (1\;2\;3\;4)\rangle$, the cyclic subgroup of order 4.

    \begin{itemize}
        \item Fixed points: $\singular{[1:1:1:1], [-1:1:-1:1]}, [i: -1: -i:1], [-i:-1:i:1]$.

        \begin{proof}
            Being fixed under $C_4$ implies all coordinates are nonzero, and imposes the following constraints 
            \[
            [a : b : c : 1] = [1/c : a/c : b/c : 1] = [c/b : 1/b : a/b : 1] = [b/a : c/a : 1/a : 1].
            \] 
            We can see that $a = 1/c = c/b = b/a$ implies 
            $c = 1/a$ and thus $b = 1/a^2$, so $a = b/a = 1/a^3$,and  $a^4 = 1$. Thus the points are of the form
            $[a : a^2 : a^3 : 1]$ where $a^4 = 1$.
        \end{proof}
    \end{itemize}
    
    \item $K_4^{\triangleleft} = \{ (),(1\;2)(3\;4), (1\;3)(2\;4), (1\;4)(2\;3) \}$, the normal Klein 4-subgroup of $S_4$.

    \begin{itemize}
        \item Fixed points: \singular{$[1 : -1 : -1 :1], [-1 : 1 : -1 : 1], [1 : 1 : 1 : 1], [-1 : -1 : 1 : 1]$}.
        
        \begin{proof}
            First, such invariant points are fixed under $(12)(34)$, which are of the form $[a : - a : - 1 : 1]$, $[a : a : 1 : 1]$, $[- 1 : 1 : a : - a]$, and $[1 : 1 : a : a]$.

            Suppose it is $[a : - a : - 1 : 1]$. Invariance under $(13)(24)$ implies $[a : - a : - 1 : 1] = [-1 : 1 : a : -a]= [1/a : -1/a : -1 : 1]$. Hence $a=1/a$ so $a = \pm 1$.

            In all the other cases, we similarly deduce that $a=\pm1$.
        \end{proof}
    \end{itemize}

    \item $K_4=\langle (1\;2), (3\;4)\rangle $, the non-normal Klein 4-subgroup of $S_4$.

    \begin{itemize}
        \item Fixed points: \singular{$[0 : 0 : -1 : 1], [a : a :1 : 1], [-1 : 1 : 0 : 0], [1 : 1 : a : a]$}.
        \begin{proof}
            Note that $C_2^e = \langle (12)(34)\rangle $ is a subgroup, so by the prior classification the fixed points are of the form $[a : - a : - 1 : 1]$, $[a : a : 1 : 1]$, $[- 1 : 1 : a : - a]$, and $[1 : 1 : a : a]$. Moreover, these are invariant under permuting the first and second coordinate, and the third and fourth coordinate. This forces the potential options to be $[0 : 0 : -1 : 1], [a : a :1 : 1], [-1 : 1 : 0 : 0], [1 : 1 : a : a]$.
        \end{proof}
    \end{itemize}
    
    \item $D_8 = \langle (1\;2\;3\;4), (1\;3) \rangle$, the dihedral subgroup with size 8.
    \begin{itemize}
        \item Fixed points: \singular{$[1:1:1:1], [-1:1:-1:1]$}
        
        \begin{proof}
            This is the case of $C_4$ with the additional constraint that $a=c$, which gives $a^2=1$.
        \end{proof}
    \end{itemize}
    
    \item $C_3=A_3 =\langle (1\;2\;3)\rangle = \{ (), (1\;2\;3), (1\;3\;2) \}$, the cyclic subgroup of order 3, viewed as the alternating group on 3 letters embedded in $S_4$ with size 3.
    \begin{itemize}
        \item Fixed points: $\singular{[a:a:a:1]}, \singular{[1:1:1:0]}, [\omega:\omega^2:1:0], [\omega^2:\omega:1:0]$. Note that the first two families of points can also be written as $\singular{[0:0:0:1], [1:1:1:a]}$.
        \begin{proof}
            If $d\neq 0$, we can normalize so $d=1$, and then the $C_3$-invariance implies $a=b=c$.

            Now suppose $d=0$. If one of the $a,b,c$ equals zero then all of them do, so we suppose all are nonzero. Therefore we normalize $c=1$. The $C_3$-invariance gives the equation
            \[
            [a:b:1:0] = [1:a:b:0] = [1/b: a/b :1:0]
            \]
            so that $a=\frac{1}{b}, b=\frac{a}{b}$. This then implies $b^3= 1,b^2=a$. Thus we obtain $b\in \{1,\omega,\omega^2\}$ where $\omega$ is a primitive third root of unity, and then we obtain the corresponding $a$. 
        \end{proof}
    \end{itemize}
    
    \item $S_3 = \langle (1\;2\;3), (1\;2)\rangle$, the symmetric group on 3 letters.
    \begin{itemize}
        \item Fixed points: \singular{$[1 : 1 : 1 : 0]$ and $[a : a : a : 1]$}. 
        \begin{proof}
            Let the fixed point be $[a:b:c:d]$. The point is $A_3$-invariant, and of the possible choices there, the only ones there which are not $S_3$-invariant are the ones involving primitive cube roots of unity.
        \end{proof}
    \end{itemize}
   
    \item $A_4 = \langle (1\;2\;3), (1\;2)(3\;4)\rangle$, the alternating group with size 12. 
    \begin{itemize}
        \item Fixed points: $\singular{[1:1:1:1]}$.
        \begin{proof}
             $A_4$ contains $A_3$, and the $A_3$-fixed points $[1:1:1:0], [\omega:\omega^2:1:0], [\omega^2:\omega:1:0]$ are not invariant under $(12)(34)$. It thus suffices to consider points of the form $[a:a:a:1]$. The invariance under $(12)(34)$ means that $[a:a:a:1] = [a:a:1:a]$ so $a = 1$.
        \end{proof}
    \end{itemize}
    
    \item $S_4$, the whole group.
    \begin{itemize}
        \item Fixed points: $\singular{[1:1:1:1]}$.
    \end{itemize}
\end{enumerate}

The results of the computations are summarized in Table \hyperref[table:P3-fixedpoints]{2}, which lists for each conjugacy class of $S_4$ the fixed points that could lie on a transverse intersection of three symmetric surfaces in $\P^3$. As in Table \ref{table:P2-fixedpoints}, we list only one point in each $S_4$-orbit, corresponding to a choice of representative in the conjugacy class. Here $a\in \C$ is a free parameter.
\begin{table}[h]
    \begin{center}
    \bgroup
    \def\arraystretch{1.15}
    \begin{tabular}{|c|c|} \hline
        \text{Isotropy subgroup} & \text{Fixed points} \\ \hline \hline
        \text{Trivial} & \text{all points} \\ \hline
        $C_2^e$ & $[a:-a:-1:1]$ \\ \hline
        $C_3$ & $[\omega : \omega^2 : 1 : 0]$ \\ \hline
        $C_4$ & $[i : -1 : -i : 1]$ \\ \hline
        \phantom{a}$C_2^o, K_4^{\triangleleft}, K_4, S_3, D_8, A_4, S_4$ \phantom{a}  & \text{none} \\ \hline
    \end{tabular}
    \egroup
    \label{table:P3-fixedpoints}
    \caption{Possible fixed points in transverse intersection in $\P^3$.}
    \end{center}
\end{table}

Although we cannot fully determine the orbit type from the product of the degrees, we can still make strong statements about the nonexistence of transverse intersections. We are also able to restrict the number of real points in the intersection.

\begin{thm}
    If three symmetric surfaces in $\P^3$ intersect transversely, then the product of their degrees is congruent to $0,2,6$ or 8 modulo 12.
    
\end{thm}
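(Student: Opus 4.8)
The plan is to mimic the proof of Theorem~\ref{thm:P2-nontransverse-1mod3}: count the total number of intersection points (which is $de f$, the product of the three degrees, by Symmetric B\'ezout) modulo $12$, using the constraint that only certain orbit types can appear. From Table~\hyperref[table:P3-fixedpoints]{2}, the possible orbits in a transverse intersection of three symmetric surfaces are $[S_4]$ (size $24$), $[S_4/C_2^e]$ (size $12$), $[S_4/C_3]$ (size $8$), and $[S_4/C_4]$ (size $6$), together with the fact that no orbit of type $[S_4/H]$ can occur for $H$ among $C_2^o, K_4^\triangleleft, K_4, S_3, D_8, A_4, S_4$. So the total number of points has the form $24a + 12b + 8c + 6d$ for nonnegative integers $a,b,c,d$, and I want to show that modulo $12$ this can only be $0, 2, 6, 8$.

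First I would reduce mod $12$: $24a \equiv 0$ and $12b \equiv 0$, so the total is $\equiv 8c + 6d \pmod{12}$. Now I need the extra input that each of the orbit types $[S_4/C_3]$ and $[S_4/C_4]$ can appear \emph{at most once}. This follows exactly as in Corollary~\ref{cor:uniqueness-of-certain-orbit-types}: by the classification (the analogue of the remark after that corollary), the only points with isotropy $C_3$ are $[\omega:\omega^2:1:0]$ and $[\omega^2:\omega:1:0]$ (and their $S_4$-translates), which form a single orbit, and similarly the only points with isotropy $C_4$ are $[i:-1:-i:1]$ and $[-i:-1:i:1]$ (and translates), forming a single orbit. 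Hence $c, d \in \{0,1\}$, and the total is $\equiv 8c + 6d \pmod{12}$ with $c,d\in\{0,1\}$, giving the four values $0$ ($c=d=0$), $6$ ($c=0,d=1$), $8$ ($c=1,d=0$), and $14\equiv 2$ ($c=d=1$). That exhausts the possibilities, proving the claim.

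The only genuine subtlety—and the step I'd flag as the main obstacle—is justifying the ``at most once'' statements rigorously. I'd need to confirm from the case analysis above that a $C_3$-isotropy point must actually lie in the $S_4$-orbit of $[\omega:\omega^2:1:0]$ and that this orbit is exactly $\{[\omega:\omega^2:1:0],[\omega^2:\omega:1:0]\}$ (so has isotropy $C_3$, not something larger), and likewise that the $C_4$-isotropy points $[i:-1:-i:1], [-i:-1:i:1]$ form one orbit of size $6$. The orbit sizes $24/|C_3| = 8$ and $24/|C_4| = 6$ are forced once we know the isotropy group, but one must check the listed points are not secretly in a single larger-isotropy orbit; Table~\hyperref[table:P3-fixedpoints]{2} already records that nothing with isotropy strictly containing $C_3$ or $C_4$ can appear, so there is no room for the orbit to collapse further. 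Given all that, the congruence computation is immediate.

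\begin{proof}
    By the Symmetric B\'ezout Theorem, the transverse intersection is a finite $S_4$-set whose cardinality equals the product of the three degrees. By Table~\hyperref[table:P3-fixedpoints]{2}, the only orbit types that can occur are $[S_4]$, $[S_4/C_2^e]$, $[S_4/C_3]$, and $[S_4/C_4]$, of sizes $24$, $12$, $8$, and $6$ respectively. Moreover, as in Corollary~\ref{cor:uniqueness-of-certain-orbit-types}, the points with isotropy $C_3$ all lie in the single $S_4$-orbit of $[\omega:\omega^2:1:0]$, and the points with isotropy $C_4$ all lie in the single $S_4$-orbit of $[i:-1:-i:1]$; hence each of the orbit types $[S_4/C_3]$ and $[S_4/C_4]$ appears at most once. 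Therefore the total number of points is
    \[
        24a + 12b + 8c + 6d, \qquad a,b \in \Z_{\geq 0},\ c,d \in \{0,1\}.
    \]
    Reducing modulo $12$, this is congruent to $8c + 6d$, which takes the value $0$, $8$, $6$, or $2$ according as $(c,d) = (0,0), (1,0), (0,1), (1,1)$. Thus the product of the degrees is congruent to $0$, $2$, $6$, or $8$ modulo $12$.
\end{proof}
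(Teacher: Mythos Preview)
Your proof is correct and follows essentially the same argument as the paper: restrict the possible orbit types via Table~\hyperref[table:P3-fixedpoints]{2}, note that the $[S_4/C_3]$ and $[S_4/C_4]$ orbits can each appear at most once, and reduce the resulting count $24a+12b+8c+6d$ modulo $12$. The paper's version is terser but logically identical.
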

\begin{proof}
    There is at most one $C_3$-invariant orbit, and at most one $C_4$-invariant orbit (see Table \hyperref[table:P3-fixedpoints]{2}.). These orbits have size 8 and 6 respectively. The other possible orbits have size 12 and 24, so the total number of points, which is the product of the degrees, must be $12k$, $12k+6$, $12k+8$, or $12k+14$ (depending on whether there is a $C_3$-invariant orbit and/or a $C_4$-invariant orbit).
\end{proof}

\begin{thm}{\label{p3realpoints}}
    If  three real symmetric surfaces in $\P^3$ intersect transversely, the number of real points in their intersection is a multiple of 12.
\end{thm}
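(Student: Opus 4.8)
The plan is to combine the orbit-size bookkeeping from the previous theorem with the observation, already noted at the end of Section 2, that the $S_4$-action and the complex-conjugation action commute, so that the real locus of a transverse intersection is a union of entire $S_4$-orbits. First I would recall from Table~\hyperref[table:P3-fixedpoints]{2} that the only orbit types that can occur in a transverse intersection of three symmetric surfaces are $[S_4]$ (size $24$), $[S_4/C_2^e]$ (size $12$), $[S_4/C_3]$ (size $8$), and $[S_4/C_4]$ (size $6$), with at most one orbit of type $[S_4/C_3]$ and at most one of type $[S_4/C_4]$. Since the real locus is $\mathrm{Gal}(\C/\R)$-stable and the two actions commute, each $S_4$-orbit in the intersection is either entirely real or entirely non-real; hence the number of real points is a sum of some subset of the orbit sizes appearing.

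Next I would argue that the $[S_4/C_3]$- and $[S_4/C_4]$-orbits, when they occur, consist of non-real points, exactly as in the $\P^2$ case (the remark following Corollary~\ref{cor:uniqueness-of-certain-orbit-types}). Concretely, the $[S_4/C_3]$-orbit is the $S_4$-orbit of $[\omega:\omega^2:1:0]$, whose coordinates involve a primitive cube root of unity, and conjugation sends $\omega\mapsto\omega^2$, so it permutes this orbit without fixing any point; thus none of its $8$ points is real. Likewise the $[S_4/C_4]$-orbit is the $S_4$-orbit of $[i:-1:-i:1]$, and conjugation $i\mapsto -i$ permutes it with no real fixed point, so none of its $6$ points is real. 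Therefore the real points all come from $[S_4]$-orbits (each contributing $24$) and the single possible $[S_4/C_2^e]$-orbit; it remains only to rule out the case where that $[S_4/C_2^e]$-orbit is real.

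For this I would use the fact that an $[S_4/C_2^e]$-orbit is the $S_4$-orbit of a point of the form $[a:-a:-1:1]$ for some $a\in\C$, and a transverse intersection contains at most one such orbit, so there is a single well-defined value of $a$ up to the $S_4$-action. If that orbit were real, the point $[a:-a:-1:1]$ would have to equal one of its coordinate permutations with a real representative; but every point in the orbit has coordinates that are a permutation of $(a,-a,-1,1)$ (up to scaling), and for this to be a real point one needs $a\in\R$ — yet I still need to exclude this. The cleanest route is to invoke Lemma~\ref{repeat-coordinates} together with the classification: the only $C_2^e$-fixed points that survive are the $[a:-a:-1:1]$ family, and an honest transverse intersection point of this shape has $a\notin\{0,\pm 1\}$ and $a\neq -a$, i.e. the four coordinates $a,-a,-1,1$ are pairwise distinct. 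One then observes that if such a point were real ($a\in\R\setminus\{0,\pm1\}$), its $S_4$-orbit of $24$ points would collapse to only $12$ because the transposition $(3\,4)$ composed with scaling by $-1$ fixes it — wait, that is exactly why its isotropy is $C_2^e$ rather than trivial, so $12$ real points is consistent with the count. Hence the real subtlety is precisely this $[S_4/C_2^e]$ case: I expect the main obstacle to be showing that a genuine transverse $[S_4/C_2^e]$-orbit cannot be real, and I would resolve it by a tangent-line/transversality argument analogous to Lemmas~\ref{diagonal-line} and the $[1:1:0]$ lemma — showing that at a real point $[a:-a:-1:1]$ the symmetry forces all three surfaces to share a common tangent hyperplane (the symmetry $(1\,2)(3\,4)$ constrains the gradient, cutting the effective rank of the $3\times 4$ Jacobian below $3$), contradicting transversality. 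With that in hand, every contributing orbit has size divisible by $12$, so the number of real points is a multiple of $12$.
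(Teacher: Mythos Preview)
Your first two paragraphs already contain the paper's entire proof: once you know that the $[S_4/C_3]$- and $[S_4/C_4]$-orbits are non-real, the real locus is a union of some $[S_4]$-orbits (size $24$) and some $[S_4/C_2^e]$-orbits (size $12$), and both $24$ and $12$ are multiples of $12$. That is exactly the paper's argument, stated in two sentences.

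The third paragraph is where things go wrong, and in two separate ways. First, the claim that there is at most one $[S_4/C_2^e]$-orbit is false: unlike the $C_3$ and $C_4$ cases, the $C_2^e$-fixed locus $[a:-a:-1:1]$ is a one-parameter family, so a transverse intersection can contain many such orbits. Second, and more importantly, there is no need whatsoever to rule out a real $[S_4/C_2^e]$-orbit. Such an orbit has $12$ points, and $12$ is a multiple of $12$; its presence does not spoil the conclusion. So the ``main obstacle'' you identify is not an obstacle at all, and the Jacobian-rank argument you sketch is both unnecessary and incorrect (the symmetry $(1\,2)(3\,4)$ sends $(a,-a,-1,1)$ to $-(a,-a,-1,1)$, not to itself, so you cannot simply equate pairs of partials as in Lemma~\ref{repeat-coordinates}; indeed real points of this shape \emph{can} lie on transverse symmetric intersections). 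Delete the third paragraph and you have a correct proof that matches the paper's.
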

\begin{proof}
    The only subgroups $H$ of $S_4$ for which real points can be $H$-invariant are the trivial subgroup and $C_2^e$. The size of the orbit of such a point must be 24 or 12.
\end{proof}


\bibliographystyle{amsalpha} 
\bibliography{bibliography}

\end{document}